\address{\noindent
Pierre Youssef,\newline
University of Alberta\newline
Department of Mathematical and Statistical Sciences\newline
Edmonton, AB, Canada.\newline
\texttt{e-mail: \small pyoussef@ualberta.ca}
}
\date{}
\newtheorem{theo}{Theorem}[section]
\newtheorem{coro}[theo]{Corollary}
\newtheorem{lem}[theo]{Lemma}
\newtheorem{Rq}[theo]{Remark}
\title[]{Extracting a basis with fixed block inside a matrix}
\author{Pierre Youssef}
\begin{document}

\maketitle

\begin{abstract}
Given $U$ an $n\times m$ matrix of rank $n$ whose columns are denoted by $(u_j)_{j\leqslant m}$, several authors have
already considered the problem of finding a subset $\sigma\subset\{1,\ldots,m\}$ such that $(u_i)_{i\in\sigma}$ span $\mathbb{R}^n$ and $\sqrt{{\rm Tr}\left(\left(\sum_{i\in\sigma} u_iu_i^t\right)^{-1}\right)}$ is minimized. 
In this paper, we generalize this problem by selecting arbitrary rank matrices instead of rank one matrices. 
Another generalization is considering the same problem while allowing a part of the matrix to be fixed. 
The methods of selection employed develop into algorithms.
\end{abstract}

\section{Introduction}

Let $U$ be an $n\times m$ matrix, we see $U$ as an operator from 
$l_2^m$ to $l_2^n$. We denote $\Vert U\Vert$ the operator norm of $U$ while 
the Hilbert-Schmidt 
norm of $U$ is given by $\Vert U\Vert_{\rm HS}= \sqrt{{\rm Tr}\left(UU^t\right)}$.  
The stable rank of $U$ is given by ${\rm srank}(U):= \Vert U\Vert_{\rm HS}^2/\Vert U\Vert^2$. 
Note that the stable rank is always less or equal to the rank. 
We denote by $s_{\max}$ and $s_{\min}$ the largest and smallest singular value, respectively. 
Given $\sigma\subset \{1,...,m\}$, we denote $U_{\sigma}$ the restriction 
of $U$ to the columns with indices in $\sigma$ i.e. $U_{\sigma}=UP_{\sigma}^t$ 
where $P_{\sigma}: \mathbb{R}^m\longrightarrow \mathbb{R}^{\sigma}$ is the canonical 
coordinate projection. When $\sigma=\emptyset$, we have $U_{\sigma}=0$. 
Finally, if $A$ and $B$ are $n\times n$ symmetric matrices, the notation $A\preceq B$ means that 
$B-A$ is positive semidefinite.

Column subset selection usualy refers to extracting from a matrix a column submatrix that has some distinguished properties. 
First results on column selection problems were obtained by Kashin \cite{MR604848}. The aim of the selection was to find 
a submatrix which minimizes the operator norm among all restrictions of the same size. It was later sharpened 
in \cite{MR1001700} and \cite{kashin-tzafriri} where, for any $n\times m$ matrix $U$ and any $\lambda \leqslant 1/4$, 
it is proved that there exists $\sigma\subset\{1,\ldots,m\}$ of size $\lambda m$ with $\Vert U_{\sigma}\Vert 
\leqslant C(\sqrt{\lambda} \Vert U\Vert + \Vert U\Vert_{\rm HS}/\sqrt{m})$, with $C$ being a universal constant. 
In \cite{MR890420}, Bourgain-Tzafriri considered selecting a block of columns which is well invertible i.e. 
whose smallest singular value is bounded away from zero; their result states that there exist universal constants 
$C$ and $C'$ such that for any $n\times n$ matrix $A$ whose columns are of norm $1$, one can find $\sigma\subset\{1,\ldots,m\}$ 
of size at least $C{\rm srank}(A)$ such that $s_{\min}(A_{\sigma}) \geqslant C'$. 
Later, Vershynin \cite{MR1826503} extended the restricted invertibility principle of Bourgain-Tzafriri to the case 
of rectangular matrices. Moreover, he also studied the extraction of a well conditioned submatrix and proved that for any $\varepsilon\in (0,1)$ and 
any $n\times m$ matrix $U$, there exists $\sigma\subset \{1,\ldots,m\}$ of size at least  $(1-\varepsilon){\rm srank}(U)$ such that 
$s_{\max}(U_{\sigma})/s_{\min}(U_\sigma)\leqslant \varepsilon^{c \log (\varepsilon)}$. 
These results were very important in geometric functional analysis and had several applications. However, the proofs were not constructive 
as they were based on random selection and made use of Grothendieck's factorization theorem.  In \cite{MR2807539}, Tropp was able 
to provide a randomized polynomial time algorithm to achieve the selection promised by the results of Bourgain-Tzafriri and Kashin-Tzafriri. 
In \cite{MR2956233}, Spielman-Srivastava produced a deterministic  polynomial time algorithm to find a well invertible submatrix inside any rectangular matrix, 
generalizing and improving the restricted invertibility principle of Bourgain-Tzafriri. Their proof is inspired by the method developed by Batson-Spielman-Srivastava 
\cite{MR2780071} to find a spectral sparsifier of a graph. In that paper, they gave a deterministic polynomial time algorithm 
to find a submatrix, with much fewer columns, which approximate the singular values of the original rectangular matrix. 
More precisely, for any $\varepsilon\in (0,1)$ and any $n\times m$ matrix $U$, they showed the existence of $\sigma\subset \{1,\ldots,m\}$ 
of size $O(n/\varepsilon^2)$ such that $(1-\varepsilon)UU^t\preceq U_\sigma U_\sigma^t\preceq (1+\varepsilon)UU^t$.
The main idea is to select the columns one by one, study the evolution of the singular values and keep controlling this 
evolution until the extraction is done. This idea was exploited in \cite{Y1} to give a deterministic polynomial time algorithm 
which finds the submatrix promised by the result of Kashin-Tzafriri. Similar tools were developed in \cite{Y2} in order 
to extract a well conditioned submatrix improving the result obtained by Vershynin \cite{MR1826503}. More precisely, we proved that for any $\varepsilon\in (0,1)$ 
and any $n\times m$ matrix $U$ with columns of norm $1$, there exists $\sigma\subset\{1,\ldots, m\}$ of size at least $(1-\varepsilon)^2{\rm srank}(U)$ 
such that the singular values of $U_\sigma$ lie between $\varepsilon/(2-\varepsilon)$ and $(2-\varepsilon)/\varepsilon$. 
When $\varepsilon$ is close to $1$, this result can be seen as dual to the one in \cite{MR2780071} and 
allowed us to produce a deterministic polynomial time algorithm to partition any $n\times n$ zero diagonal 
matrix into $\log(n)$ square blocks of small norm around the diagonal. Doing such a partition with a number of blocks independent 
of the dimension is known as the paving problem \cite{MR525951} 
which is equivalent to the Kadison-Singer problem , recently solved by Marcus-Spielman-Srivastava \cite{MSS}.

In this paper, we are interested in another column selection problem. Let $U$ be 
an $n\times m$ matrix of rank $n$, we want to extract an $n\times n$ invertible matrix 
inside $U$ which minimizes the Hilbert-Schmidt norm of the inverse. The invertibility question 
is closely related to the restricted invertibility studied in \cite{MR890420} and \cite{MR2956233}; 
the difference here is that we want to extract exactly $rank(U)$ columns while in the restricted 
invertibility principle, one is only allowed to extract a number of columns strictly less than 
$srank(U)$. 
We learned about this problem in a paper of Gluskin-Olevskii \cite{MR2852316},  who proved the following:

\begin{theo}[Gluskin-Olevskii]\label{gluskin-olevskii}
Let $U$ be an $n\times m$ matrix of rank $n$. Then there exists $\sigma\subset \{1,..,m\}$ of size $n$ 
such that $U_{\sigma}$ is invertible and 
$$
\Vert U_{\sigma}^{-1}\Vert_{\rm HS}^2\leqslant (m-n+1)\cdot {\rm Tr}\left(UU^t\right)^{-1}.
$$
\end{theo}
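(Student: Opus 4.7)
The plan is to prove the theorem by a weighted averaging argument over all $n$-subsets of $\{1,\ldots,m\}$, where the weights are chosen so that the Cauchy--Binet formula gives clean telescoping expressions. First I would reformulate the quantity of interest: since $U_{\sigma}$ is a square matrix when $|\sigma|=n$, we have $\Vert U_{\sigma}^{-1}\Vert_{\rm HS}^2 = {\rm Tr}\bigl((U_{\sigma} U_{\sigma}^t)^{-1}\bigr)$, so the theorem amounts to producing $\sigma$ of size $n$ with $\det(U_{\sigma} U_{\sigma}^t)>0$ and
$$ {\rm Tr}\bigl((U_{\sigma} U_{\sigma}^t)^{-1}\bigr) \leqslant (m-n+1)\,{\rm Tr}\bigl((UU^t)^{-1}\bigr). $$

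Next I would introduce the weights $w_{\sigma}:=\det(U_{\sigma} U_{\sigma}^t)$ for $|\sigma|=n$; by Cauchy--Binet, $\sum_{|\sigma|=n} w_{\sigma}=\det(UU^t)$, and the assumption ${\rm rank}(U)=n$ ensures this is strictly positive. The key algebraic identity I would use is that for any invertible $n\times n$ matrix $A$, $\det(A)\,{\rm Tr}(A^{-1})$ equals the sum of its $(n-1)\times(n-1)$ principal minors. Applied to $A=U_{\sigma}U_{\sigma}^t$, the principal minor obtained by deleting row and column $i$ is exactly $\det\bigl(U^{(i)}_{\sigma}(U^{(i)}_{\sigma})^t\bigr)$, where $U^{(i)}$ denotes $U$ with its $i$-th row removed. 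So
$$ w_{\sigma}\,{\rm Tr}\bigl((U_{\sigma} U_{\sigma}^t)^{-1}\bigr) = \sum_{i=1}^n \det\bigl(U^{(i)}_{\sigma}(U^{(i)}_{\sigma})^t\bigr). $$

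Then I would sum over $\sigma$ and apply Cauchy--Binet twice. For fixed $i$, the matrix $U^{(i)}_{\sigma}$ is $(n-1)\times n$, so Cauchy--Binet yields $\det\bigl(U^{(i)}_{\sigma}(U^{(i)}_{\sigma})^t\bigr)=\sum_{\tau\subset\sigma,\,|\tau|=n-1}\det(U^{(i)}_{\tau})^2$. Swapping the order of summation, each $(n-1)$-subset $\tau$ is contained in exactly $m-n+1$ of the $n$-subsets $\sigma$, so
$$ \sum_{|\sigma|=n}\det\bigl(U^{(i)}_{\sigma}(U^{(i)}_{\sigma})^t\bigr) = (m-n+1)\sum_{|\tau|=n-1}\det(U^{(i)}_{\tau})^2 = (m-n+1)\det\bigl(U^{(i)}(U^{(i)})^t\bigr), $$
using Cauchy--Binet for $U^{(i)}$ itself. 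Summing over $i$ and recognizing $\sum_i \det(U^{(i)}(U^{(i)})^t) = \det(UU^t)\,{\rm Tr}((UU^t)^{-1})$ gives
$$ \sum_{|\sigma|=n} w_{\sigma}\,{\rm Tr}\bigl((U_{\sigma} U_{\sigma}^t)^{-1}\bigr) = (m-n+1)\,\det(UU^t)\,{\rm Tr}\bigl((UU^t)^{-1}\bigr). $$
Dividing by $\sum_\sigma w_\sigma = \det(UU^t)$ shows that the $w$-weighted average of ${\rm Tr}((U_{\sigma} U_{\sigma}^t)^{-1})$ equals $(m-n+1)\,{\rm Tr}((UU^t)^{-1})$, so at least one $\sigma$ with $w_\sigma>0$ achieves the claimed bound.

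I do not expect a real obstacle here; the entire argument hinges on spotting that $w_\sigma=\det(U_{\sigma}U_{\sigma}^t)$ is the correct weighting to turn both the denominator and the numerator into quantities that Cauchy--Binet handles. The only delicate point to verify is the identity $\det(A){\rm Tr}(A^{-1})=\sum_i\det(A^{(i,i)})$, which follows from Jacobi's formula or directly by expanding $\det(A)$ via its eigenvalues.
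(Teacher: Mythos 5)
Your proof is correct, but it takes a genuinely different route from the one in the paper. The paper's Appendix gives an iterative, algorithmic argument: starting from $A_0 = UU^t$, it repeatedly invokes the Sherman--Morrison formula to remove one column at a time, at each step showing (via a trace--averaging argument over the current columns) that there is always a column whose removal preserves invertibility and inflates ${\rm Tr}\left(A^{-1}\right)$ by a factor of at most $\frac{m-n+1}{m-n}$; after $m-n$ removals the product telescopes to $m-n+1$. Your argument is instead a single global weighted average over all $n$-subsets, with weights $w_\sigma = \det\left(U_\sigma U_\sigma^t\right)$, computed in closed form by two applications of Cauchy--Binet and the adjugate identity $\det(A)\,{\rm Tr}\left(A^{-1}\right) = {\rm Tr}\left(\mathrm{adj}(A)\right) = \sum_i \det\left(A^{(i,i)}\right)$. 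The trade-off is exactly what motivates the paper: your Cauchy--Binet computation is cleaner and in fact gives the identity
$$
\sum_{|\sigma|=n} {\rm Tr}\left(\mathrm{adj}\left(U_\sigma U_\sigma^t\right)\right) = (m-n+1)\det\left(UU^t\right){\rm Tr}\left(\left(UU^t\right)^{-1}\right),
$$
but it is purely existential (sampling $\sigma$ proportionally to $w_\sigma$ requires computing exponentially many determinants), whereas the paper's one-at-a-time removal yields a deterministic polynomial-time algorithm, and moreover generalizes directly to the fixed-block and arbitrary-rank setting of Theorem~\ref{general-result}.

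One small imprecision worth fixing: you claim the $w$-weighted average of ${\rm Tr}\left(\left(U_\sigma U_\sigma^t\right)^{-1}\right)$ \emph{equals} $(m-n+1)\,{\rm Tr}\left(\left(UU^t\right)^{-1}\right)$. That is only true if you include degenerate $\sigma$'s with $w_\sigma = 0$, for which the factor $w_\sigma\,{\rm Tr}\left(\left(U_\sigma U_\sigma^t\right)^{-1}\right)$ must be read as ${\rm Tr}\left(\mathrm{adj}\left(U_\sigma U_\sigma^t\right)\right)$. Such a $\sigma$ can still contribute a strictly positive quantity (e.g. $U_\sigma$ of rank $n-1$ with some $U^{(i)}_\sigma$ of full rank $n-1$), so after discarding them the weighted average over $\{\sigma : w_\sigma > 0\}$ is only $\leqslant (m-n+1)\,{\rm Tr}\left(\left(UU^t\right)^{-1}\right)$. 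This inequality is of course all you need, and the conclusion stands.
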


The proof of Gluskin-Olevskii is only existential. At first, we were interested in giving a deterministic 
algorithm achieving the extraction. This can be done by carefully removing the "bad" columns of $U$. 
However, after doing so, it turned out that similar algorithms were already established in \cite{MR2305124} and later in 
\cite{MR3121759}\footnote{ We would like to thank Nick Harvey for pointing out to us this reference.}. We refer to \cite{MR3121759} for 
a complete exposition of this problem. We include 
our algorithmic proof in the Appendix as it would be simpler for the reader to understand the proof of the main result which is in the same spirit. 

Our aim in this paper is to generalize Theorem~\ref{gluskin-olevskii} in different ways. The first generalization 
is considering the same problem with the constraint of keeping inside the extraction a block chosen at the beginning. 
This can be useful if we already know the existence of a "good" block inside $U$. In that case, we want to keep this block and 
try to complement it instead of doing the full extraction. This has the advantage of reducing the complexity of the algorithm 
and improving the estimate on the Hilbert-Schmidt norm of the inverse of the restriction. 
In \cite{MR3121759}, an interesting connection between low-stretch
spanning trees and subset selection was made. It is proved there, how finding 
a low-stretch tree inside a graph is reduced to the problem studied in Theorem~\ref{gluskin-olevskii} for 
an appropriately chosen matrix $U$. Therefore, selecting columns inside the matrix $U$ corresponds to selecting edges of the graph. 
In our generalization, doing the extraction while keeping a fixed block of columns would reflect in 
finding a low-stretch tree under the constraint of keeping a subtree chosen at the beginning. 
We refer the interested reader to section~5 in \cite{MR3121759} for more information on this subject. 
Let us note that the idea of selecting edges of a graph while required to keep a subgraph has already been exploited 
in \cite{MR2743254}. In that paper, the authors were interested in finding a spectral sparsifier similarly to what is done in \cite{MR2780071} 
with the constraint of preserving a subgraph.

The second generalization concerns arbitrary rank matrices; indeed, Theorem~\ref{gluskin-olevskii} states 
that given $UU^t= \sum_{i\leqslant m} u_iu_i^t$, there exists $\sigma\subset \{1,..,m\}$ of size $n$ 
such that $\sum_{i\in \sigma} u_iu_i^t$ is invertible and 
$$
{\rm Tr}\left(\sum_{i\in \sigma} u_iu_i^t\right)^{-1}\leqslant (m-n+1)\cdot {\rm Tr}\left(UU^t\right)^{-1}.
$$
Whereas Theorem~\ref{gluskin-olevskii} only deals with rank one matrices, we consider the same problem 
when replacing the rank one matrices by arbitrary rank matrices. Extending column selection problems 
to an arbitrary rank setting is natural and can be very useful. In \cite{CHS}, the authors extended the spectral sparsification 
of \cite{MR2780071} to an arbitrary rank setting and showed how it can be used in order to find a spectral sparsifier of a graph 
while controlling additional parameters like the cost functions. 

The main result of this paper is the following:

\begin{theo}\label{general-result}
Let $A, B, B_1,..., B_m$ be $n\times n$ positive semidefinite matrices such that 
$A=B+ \sum_{i\leqslant m} B_i$ is of rank $n$. Then for any 
$k\geqslant n - \left\lfloor {\rm Tr}\left(A^{-1}B\right)\right\rfloor$, 
there exists $\sigma_k$ of size $k$ such that 
$$
{\rm Tr}\left(A_{\sigma_k}^{-1}\right)\leqslant {\rm Tr}\left(A^{-1}\right)\cdot\left[\frac{m-n
+{\rm Tr}\left(A^{-1}B\right)+1}{k-n+1+{\rm Tr}\left(A^{-1}B\right)}\right] 
- \frac{(m-k)\cdot{\rm Tr}\left(A^{-2}B\right)}{k-n+1+{\rm Tr}\left(A^{-1}B\right)},
$$
where $A_{\sigma_k} = B+\sum_{i\in\sigma_k}B_i$. 

In particular, 
there exists $\sigma \subset \{1,...,m\}$ with 
$$
\vert \sigma\vert = n- \left\lfloor{\rm Tr}\left( A^{-1}B\right)\right\rfloor
$$ such that 
$$
{\rm Tr}\left(A_{\sigma}^{-1}\right) \leqslant \left(m-n+1+{\rm Tr}\left(A^{-1}B\right)\right)\cdot{\rm Tr}\left(A^{-1}\right) 
-{\rm Tr}\left(A^{-2}B\right)\left(m-n+{\rm Tr}\left(A^{-1}B\right)\right),
$$
where $A_\sigma =B+ \sum_{i\in \sigma} B_i$.
\end{theo}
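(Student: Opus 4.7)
My plan is to select $\sigma_k$ by iterative removal, starting from $\sigma=\{1,\ldots,m\}$, and to track a potential function whose initial value is exactly the right-hand side of the theorem. Writing $t={\rm Tr}(A^{-1}B)$ and $s={\rm Tr}(A^{-2}B)$, define
\[
F(\sigma) \;:=\; (|\sigma|-n+1+t)\,{\rm Tr}(A_\sigma^{-1}) + (m-|\sigma|)\,s,
\]
so that $F(\{1,\ldots,m\})=(m-n+1+t)\,{\rm Tr}(A^{-1})$. Since $k-n+1+t>0$ by hypothesis, the desired main bound is equivalent to $F(\sigma_k)\leq F(\{1,\ldots,m\})$ after dividing through by $k-n+1+t$. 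It therefore suffices to remove indices one at a time in such a way that $F$ never increases.

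A direct computation shows that removing $i$ from $\sigma$ keeps $F$ non-increasing exactly when the increment $\delta_i={\rm Tr}((M-B_i)^{-1})-{\rm Tr}(M^{-1})$ (with $M=A_\sigma$) satisfies
\[
(|\sigma|-n+t)\,\delta_i \;\leq\; {\rm Tr}(M^{-1})-s. \tag{$\star$}
\]
To find such an $i$, I will use the Sherman--Morrison--Woodbury formula: writing $B_i=C_iC_i^t$, one has $\delta_i={\rm Tr}((I-C_i^t M^{-1}C_i)^{-1}C_i^t M^{-2}C_i)$. The identity $\sum_{i\in\sigma}B_i=M-B$ yields $\sum_{i\in\sigma}{\rm Tr}(C_i^t M^{-1}C_i)=n-{\rm Tr}(M^{-1}B)$ and $\sum_{i\in\sigma}{\rm Tr}(C_i^t M^{-2}C_i)={\rm Tr}(M^{-1})-{\rm Tr}(M^{-2}B)$, and a min-over-average inequality then gives
\[
\min_{i\in\sigma}\delta_i \;\leq\; \frac{{\rm Tr}(M^{-1})-{\rm Tr}(M^{-2}B)}{|\sigma|-n+{\rm Tr}(M^{-1}B)}.
\]

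Combined with the easy comparison ${\rm Tr}(M^{-1}B)\geq t$ (from $M\preceq A\Rightarrow M^{-1}\succeq A^{-1}$), a short algebraic manipulation will produce $(\star)$ for the $i$ achieving this minimum, and telescoping from $|\sigma|=m$ down to $|\sigma|=k$ then yields the main bound. The ``in particular'' statement follows by specializing to $k=n-\lfloor t\rfloor$ and using the elementary estimate $s\leq {\rm Tr}(A^{-1})$ (which follows from $B\preceq A$, since ${\rm Tr}(A^{-2}B)\leq {\rm Tr}(A^{-2}A)={\rm Tr}(A^{-1})$) to absorb the fractional denominator $1+(t-\lfloor t\rfloor)$.

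The hard part will be the passage from the natural averaging bound, whose numerator and denominator depend on the instantaneous quantities ${\rm Tr}(M^{-1}B)$ and ${\rm Tr}(M^{-2}B)$, to $(\star)$, which involves the fixed $t$ and $s$ attached to $A$. Since the square is not monotone in the Loewner order, ${\rm Tr}(M^{-2}B)\geq s$ need not hold even when $M\preceq A$; the resolution must use the slack in ${\rm Tr}(M^{-1}B)\geq t$ to compensate for any deficit of ${\rm Tr}(M^{-2}B)$ below $s$, and verifying this compensation is the delicate algebraic step in the whole argument.
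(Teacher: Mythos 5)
Your approach is the same as the paper's: remove indices one at a time, use the Sherman--Morrison--Woodbury formula plus an averaging argument to choose which index to drop, and telescope. Your potential $F(\sigma)=(|\sigma|-n+1+t)\,{\rm Tr}(A_\sigma^{-1})+(m-|\sigma|)s$ is exactly a rewriting of the paper's inductive invariant, and the monotonicity condition $(\star)$ you derive, namely $(|\sigma|-n+t)\,\delta_i\leqslant {\rm Tr}(M^{-1})-s$, is precisely the inequality the paper uses when it passes from the bound produced by Lemma~\ref{iteration-lem} (stated in terms of ${\rm Tr}(A_p^{-1}B)$ and ${\rm Tr}(A_p^{-2}B)$) to the displayed inequalities (\ref{first-trace-A2}), (\ref{first-trace-Ap}) (stated in terms of ${\rm Tr}(A_0^{-1}B)$ and ${\rm Tr}(A_0^{-2}B)$). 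Your reduction of the ``in particular'' clause to ${\rm Tr}(A^{-2}B)\leqslant {\rm Tr}(A^{-1})$ is also correct.

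However, the proposal is incomplete at exactly the place you flag as ``the delicate algebraic step.'' Writing $t_M={\rm Tr}(M^{-1}B)$, $s_M={\rm Tr}(M^{-2}B)$, $T={\rm Tr}(M^{-1})$, $q=|\sigma|-n$, the averaging bound gives $\min_i\delta_i\leqslant (T-s_M)/(q+t_M)$, so to obtain $(\star)$ one must prove $(q+t)(T-s_M)\leqslant (T-s)(q+t_M)$, which rearranges to $(q+t_M)(s-s_M)\leqslant (T-s_M)(t_M-t)$. This is trivial when $s\leqslant s_M$, but you correctly observe that $s\leqslant s_M$ need not hold: $M\preceq A$ gives $M^{-1}\succeq A^{-1}$, hence $t_M\geqslant t$, but $x\mapsto x^{2}$ is not operator monotone, so nothing forces ${\rm Tr}(M^{-2}B)\geqslant {\rm Tr}(A^{-2}B)$. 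You have not supplied the ``compensation'' argument, so as written the proof has a gap. It is worth noting that the paper itself supplies no justification either: the only stated reason for (\ref{first-trace-A2}) is ``Since $A_0^{-1}\preceq A_1^{-1}$,'' which only controls $t_M$ and leaves the $s_M$ comparison unaddressed. So you have sharpened and correctly located a step that the paper glosses over, but to complete the proof you would need to actually establish $(q+t_M)(s-s_M)\leqslant (T-s_M)(t_M-t)$, and it is not clear from the paper's argument that this inequality holds in general (it is automatic only when $B=0$, i.e.\ in the setting of Theorem~\ref{gluskin-olevskii} and Corollary~\ref{arbitrary-rank}).
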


With the previous notations, $B$ is the matrix that we would like to preserve while selecting among the $(B_i)_{i\leqslant m}$ 
the ones that will minimize the trace of the inverse of the corresponding sum. 

To have a better understanding on the limitation on $k$ in the previous statement, take $A$ to be the identity on 
$\mathbb{R}^n$, $B$ a projection of rank $l$ and $B_1,\ldots, B_m$ rank one projections. Note that in this case $\left\lfloor {\rm Tr}\left(A^{-1}B\right)\right\rfloor=l$. 
To form a rank $n$ matrix while keeping $B$, one should at least choose $n-l$ rank one matrices which is exactly the limitation given by $n - \left\lfloor {\rm Tr}\left(A^{-1}B\right)\right\rfloor$.
\vskip 0.5cm 

In section 2, we will discuss how this result generalizes Theorem~\ref{gluskin-olevskii} and state 
direct consequences of it. The proof of Theorem~\ref{general-result} will be given in section 3.

\section{Derived results}

 Let us first derive the rank one case of Theorem~\ref{general-result}. This case is interesting 
since the selection can be traced as a selection of columns inside the matrix. 
Preserving the matrix $B$ in Theorem~\ref{general-result} plays the role of preserving 
a block inside the matrix. Precisely, we have the following:

 \begin{theo}\label{general-result-rank-one}
Let $U$ be an $n\times m$ matrix of rank $n$ and $\nu\subset\{1,..,m\}$. 
Denote by $V$ the block of columns inside $U$ 
corresponding to $\nu$ i.e. $V= U_\nu$. Let $A=UU^t$ and $B= VV^t$. 
For any $k\geqslant n -\left\lfloor{\rm Tr}\left(A^{-1}B\right)\right\rfloor$, 
there exists $\sigma_k'\subset \nu^c$ of size $k$ such that if $\sigma_k=\sigma_k' \cup \nu$ 
then $U_{\sigma_k}$ is of rank $n$ and
$$
{\rm Tr}\left(U_{\sigma_k}U_{\sigma_k}^t\right)^{-1}\leqslant {\rm Tr}\left(A^{-1}\right)\cdot\left[\frac{\vert\nu^c\vert-n
+{\rm Tr}\left(A^{-1}B\right)+1}{k-n+1+{\rm Tr}\left(A^{-1}B\right)}\right] 
- \frac{\left(\vert\nu^c\vert-k\right)\cdot{\rm Tr}\left(A^{-2}B\right)}{k-n+1+{\rm Tr}\left(A^{-1}B\right)}.
$$
In particular, there exists $\sigma'\subset \nu^c$ with 
$$
\vert \sigma'\vert =  n- \left\lfloor{\rm Tr}\left(A^{-1}B\right)\right\rfloor,
$$
such that
$$
{\rm Tr}\left(U_{\sigma}U_{\sigma}^t\right)^{-1}\leqslant \left(\vert\nu^c\vert-n+1+{\rm Tr}\left(A^{-1}B\right)\right)\cdot{\rm Tr}\left(A^{-1}\right) 
-{\rm Tr}\left(A^{-2}B\right)\left(\vert\nu^c\vert-n+{\rm Tr}\left(A^{-1}B\right)\right),
$$
where $\sigma=\sigma' \cup \nu$.
\end{theo}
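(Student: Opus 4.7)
The plan is to deduce Theorem~\ref{general-result-rank-one} as a direct specialization of Theorem~\ref{general-result} to rank-one summands indexed by the complement $\nu^c$. Denote the columns of $U$ by $(u_j)_{j\leqslant m}$. I would set
\[
A := UU^t, \qquad B := VV^t = \sum_{i\in\nu} u_iu_i^t, \qquad B_i := u_iu_i^t \text{ for } i\in\nu^c,
\]
so that
\[
B + \sum_{i\in\nu^c} B_i = \sum_{i\in\nu} u_iu_i^t + \sum_{i\in\nu^c} u_iu_i^t = UU^t = A,
\]
and $A$ is of rank $n$ by hypothesis. This places us exactly in the framework of Theorem~\ref{general-result}, with the family of $m$ matrices there replaced by the family of $|\nu^c|$ rank-one matrices $\{u_iu_i^t\}_{i\in\nu^c}$.

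Next I would apply Theorem~\ref{general-result} verbatim to this data, with the role of the parameter $m$ played by $|\nu^c|$. For any $k\geqslant n - \lfloor {\rm Tr}(A^{-1}B)\rfloor$, the theorem produces a subset $\sigma_k'\subset \nu^c$ of size $k$ such that
\[
A_{\sigma_k'} \,=\, B + \sum_{i\in\sigma_k'} u_iu_i^t
\]
satisfies the trace bound with $m$ replaced by $|\nu^c|$. The key identification is that, upon setting $\sigma_k := \sigma_k' \cup \nu$, one has
\[
A_{\sigma_k'} \,=\, \sum_{i\in\nu}u_iu_i^t + \sum_{i\in\sigma_k'} u_iu_i^t \,=\, \sum_{i\in\sigma_k} u_iu_i^t \,=\, U_{\sigma_k}U_{\sigma_k}^t.
\]
In particular, the finiteness of ${\rm Tr}(A_{\sigma_k'}^{-1})$ forces $U_{\sigma_k}U_{\sigma_k}^t$ to be invertible, so $U_{\sigma_k}$ has rank $n$. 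Substituting $|\nu^c|$ for $m$ in the conclusion of Theorem~\ref{general-result} and rewriting $A_{\sigma_k'}^{-1}$ as $(U_{\sigma_k}U_{\sigma_k}^t)^{-1}$ yields the first displayed inequality. The ``in particular'' clause follows by taking $k = n - \lfloor{\rm Tr}(A^{-1}B)\rfloor$, which is the smallest admissible value.

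Since the argument is purely a substitution into the already-stated general theorem, there is no real obstacle: the only thing to verify is the bookkeeping that the index set of the summands in Theorem~\ref{general-result} can be chosen as an arbitrary finite set (here $\nu^c$) rather than literally $\{1,\dots,m\}$, and that the extraction respects $\sigma_k'\subset \nu^c$. Both are immediate from the proof of Theorem~\ref{general-result}, so the derivation is essentially free.
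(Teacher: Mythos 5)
Your proposal is correct and follows exactly the same route as the paper: apply Theorem~\ref{general-result} with $B=VV^t$ and $B_i=u_iu_i^t$ for $i\in\nu^c$, with $m$ replaced by $|\nu^c|$, and identify $B+\sum_{i\in\sigma_k'}u_iu_i^t$ with $U_{\sigma_k}U_{\sigma_k}^t$ for $\sigma_k=\sigma_k'\cup\nu$. You have merely spelled out the bookkeeping in more detail than the paper does (and, incidentally, corrected the paper's typo that writes ``$i\in\nu$'' where it should be ``$i\in\nu^c$'' when defining the rank-one summands).
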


\begin{proof}
Denote by $(u_i)_{i\leqslant m}$ the columns of $U$. Clearly, $UU^t= \sum_{i\leqslant m} u_iu_i^t$. 
For any $i\in\nu$, define $B_i= u_iu_i^t$. Then we have $A=B+\sum_{i\in\nu^c}B_i$. Theorem~\ref{general-result-rank-one} 
follows by applying Theorem~\ref{general-result} to $A,B, (B_i)_{i\in\nu^c}$ with $m$ replaced by $\vert\nu^c\vert$. 
\end{proof}
 
 \vskip 0.3cm
 
 Theorem~\ref{general-result-rank-one} allows one to preserve the block $V$ while achieving the desired extraction. 
 Let us illustrate how this can be useful. For this aim, consider 
 the case where $UU^t=Id$.

\begin{coro}\label{isotropic-case}
Let $U$ be an $n\times m$ matrix such that $UU^t=Id$. Suppose that $U$ contains 
$r$ columns of norm $1$ for some $r\leqslant n$. Then there exists $\sigma\subset\{1,..,m\}$ of 
size $n$ such that $U_\sigma$ is invertible and 
$$
\Vert U_{\sigma}^{-1}\Vert_{\rm HS}^2\leqslant (m-n)\cdot (n-r) +n.
$$
\end{coro}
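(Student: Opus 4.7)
The plan is to apply Theorem~\ref{general-result-rank-one} with $\nu$ chosen to be the set of indices of the $r$ norm-one columns of $U$. This is the natural choice: the ``good'' block that we want to preserve is precisely the one consisting of the columns that already contribute optimally (norm $1$) to the isotropic decomposition $UU^t = \mathrm{Id}$.

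Concretely, I would set $V = U_\nu$ and $B = VV^t$, and compute the three scalars that drive the bound in Theorem~\ref{general-result-rank-one}. Since $A = UU^t = \mathrm{Id}$, we have $A^{-1} = A^{-2} = \mathrm{Id}$, so $\mathrm{Tr}(A^{-1}) = n$, and both $\mathrm{Tr}(A^{-1}B)$ and $\mathrm{Tr}(A^{-2}B)$ reduce to $\mathrm{Tr}(B) = \sum_{i \in \nu}\|u_i\|^2 = r$, where the last equality uses precisely that each of the $r$ chosen columns has norm exactly $1$. In particular $\lfloor \mathrm{Tr}(A^{-1}B)\rfloor = r$, so the ``in particular'' clause of Theorem~\ref{general-result-rank-one} produces $\sigma' \subset \nu^c$ of size $n - r$, and $\sigma = \sigma' \cup \nu$ therefore has size exactly $n$ as required.

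It remains to specialize the bound. With $|\nu^c| = m - r$, the right-hand side of Theorem~\ref{general-result-rank-one} becomes
\[
(m-r-n+1+r)\cdot n - r\cdot(m-r-n+r) = (m-n+1)n - r(m-n),
\]
which rearranges to $(m-n)(n-r) + n$. Since $U_\sigma$ is square and invertible, $\mathrm{Tr}((U_\sigma U_\sigma^t)^{-1}) = \|U_\sigma^{-1}\|_{\mathrm{HS}}^2$, and the desired estimate follows.

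There is no real obstacle here; the statement is essentially a direct evaluation of Theorem~\ref{general-result-rank-one} in the isotropic case. The only substantive observation is conceptual rather than technical: one must recognize that preserving the norm-one columns as the fixed block $B$ is exactly what allows the ``$-\mathrm{Tr}(A^{-2}B)(m-n+\mathrm{Tr}(A^{-1}B))$'' correction term in the general theorem to produce the savings $-r(m-n)$, which is what turns the Gluskin--Olevskii-type estimate $n(m-n+1)$ into the improved bound $(m-n)(n-r)+n$.
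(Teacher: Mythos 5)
Your proof is correct and takes essentially the same route as the paper: choose $\nu$ to be the indices of the $r$ norm-one columns, set $B = VV^t$, observe that $A = \mathrm{Id}$ collapses all three trace quantities to $n$, $r$, $r$ respectively, and plug into the ``in particular'' clause of Theorem~\ref{general-result-rank-one}. The paper additionally dispatches the degenerate $r=n$ case (which forces $m=n$) up front and remarks that $V$ automatically has rank $r$, but these are expository asides and your calculation covers the substance identically.
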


\begin{proof}
First note that $r=n$ implies that $m=n$ and the problem is trivial. Indeed, if $n<m$ we have $n={\rm Tr}\left(UU^t\right)= \sum_{i\leqslant m} \Vert u_j\Vert_2^2> r$, where 
$(u_j)_{j\leqslant m}$ denote the columns of $U$. This of course contradicts the fact that $r=n$.
  
Now let us consider the case $r<n$. 
Suppose that $UU^t=Id$ and that $U$ contains $r$ columns of norm $1$. Let $\nu\subset \{1,..,m\}$ 
be the set of indices of the norm one columns and denote $V=U_\nu$. Clearly, $V$ is an $n\times r$ matrix 
of rank $r$ since $r=\Vert V\Vert_{\rm HS}^2\leqslant \Vert V\Vert^2\cdot rank(V)$ and $\Vert V\Vert\leqslant 1$. 
By preserving $V$, we already have $r$ linearly independent columns and therefore we just need to complement them 
by $n-r$ linearly independent columns. 

Following the notations of Theorem~\ref{general-result-rank-one}, we have $A=Id$ and $B=VV^t$. Applying 
Theorem~\ref{general-result-rank-one}, one can find $\sigma'$ with
$$
\vert \sigma'\vert =  n- \left\lfloor{\rm Tr}\left(A^{-1}B\right)\right\rfloor= n-r,
$$
such that if $\sigma=\sigma' \cup \nu$, then $\vert \sigma\vert =n$ and 
\begin{align*}
\Vert U_{\sigma}^{-1}\Vert_{\rm HS}^2&\leqslant \left(\vert\nu^c\vert-n+1+{\rm Tr}\left(A^{-1}B\right)\right)\cdot{\rm Tr}\left(A^{-1}\right) 
-{\rm Tr}\left(A^{-2}B\right)\left(\vert\nu^c\vert-n+{\rm Tr}\left(A^{-1}B\right)\right)\\
&=\left(m-r-n+1+r\right)\cdot n
-r\cdot\left(m-r-n+r\right)\\
&=(m-n)\cdot (n-r) +n.
\end{align*}
\end{proof}

\begin{Rq}
When we don't look for columns of norm $1$ inside $U$ (i.e. the case where $r=0$), the estimate 
would be the same as in Theorem~\ref{gluskin-olevskii} while if $r\neq 0$, the estimate of 
Corollary~\ref{isotropic-case} improves the one of Theorem~\ref{gluskin-olevskii} by $r\cdot (m-n)$. 
Moreover, one can view Corollary~\ref{isotropic-case} as an extraction of a basis inside $U$ with 
a "good" estimate on the Hilbert-Schmidt of the inverse while preserving a "nice" block.
\end{Rq}

To have a better understanding of the generalization of Theorem~\ref{gluskin-olevskii} to the case of arbitrary rank matrices, 
let us take $B=0$ in Theorem~\ref{general-result} to get the following corollary:

\begin{coro}\label{arbitrary-rank}
Let $A, B_1,\ldots,B_m$ be $n\times n$ positive semidefinite matrices such that 
$A=\sum_{i\leqslant m} B_i$ is of rank $n$. Then for any $k\geqslant n$, there exists $\sigma\subset \{1,..,m\}$ 
of size $k$ such that $\sum_{i\in \sigma} B_i$ is invertible and
$$
{\rm Tr}\left(\sum_{i\in \sigma} B_i\right)^{-1}\leqslant \frac{m-n+1}{k-n+1}{\rm Tr}\left(A^{-1}\right).
$$
\end{coro}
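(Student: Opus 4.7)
The plan is to derive Corollary~\ref{arbitrary-rank} as a direct specialization of Theorem~\ref{general-result}, taking the ``preserved block'' $B$ to be the zero matrix. First I would check the hypotheses: since $A = 0 + \sum_{i\leq m} B_i$ is of rank $n$, the decomposition $A = B + \sum_{i \leq m} B_i$ required by Theorem~\ref{general-result} holds with $B = 0$.

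Next I would compute the quantities appearing in the estimate. With $B = 0$, both $\operatorname{Tr}(A^{-1}B)$ and $\operatorname{Tr}(A^{-2}B)$ vanish. Consequently $\lfloor \operatorname{Tr}(A^{-1}B)\rfloor = 0$, so the range $k \geq n - \lfloor \operatorname{Tr}(A^{-1}B)\rfloor$ of Theorem~\ref{general-result} becomes $k \geq n$, matching the hypothesis of the corollary. Substituting into the bound of Theorem~\ref{general-result}, the second term
$$\frac{(m-k)\cdot\operatorname{Tr}(A^{-2}B)}{k-n+1+\operatorname{Tr}(A^{-1}B)}$$
disappears, while the first term simplifies to $\frac{m-n+1}{k-n+1}\operatorname{Tr}(A^{-1})$. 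Noting that $A_{\sigma_k} = B + \sum_{i\in\sigma_k} B_i = \sum_{i\in \sigma_k} B_i$ in this case, one obtains precisely the claimed inequality; the invertibility of $\sum_{i\in\sigma}B_i$ follows from the finiteness of $\operatorname{Tr}((\sum_{i\in\sigma}B_i)^{-1})$, or more directly from Theorem~\ref{general-result} which asserts that $A_{\sigma_k}$ is invertible since the trace of its inverse is bounded.

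There is no genuine obstacle here: the entire content of the corollary is already encoded in Theorem~\ref{general-result}, and the only work is bookkeeping of the vanishing terms. In particular, the purpose of exhibiting this corollary is expository—it shows that in the ``no fixed block'' regime, Theorem~\ref{general-result} recovers a clean trace-of-inverse bound that extends Theorem~\ref{gluskin-olevskii} (which corresponds further to $k = n$) to selections of $k \geq n$ matrices of arbitrary rank.
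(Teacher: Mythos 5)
Your proposal is correct and is exactly the paper's own argument: the paper introduces Corollary~\ref{arbitrary-rank} precisely as the specialization $B=0$ of Theorem~\ref{general-result}, and your bookkeeping of the vanishing terms ${\rm Tr}(A^{-1}B)$ and ${\rm Tr}(A^{-2}B)$ matches. Nothing further is required.
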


\section{Proof of Theorem~\ref{general-result}}

The proof is based on an iteration of the following lemma:

\begin{lem}\label{iteration-lem}
Let $A_p, B, B_1,..., B_p$ be $n\times n$ positive semidefinite matrices such that 
$A_p=B+ \sum_{i\leqslant p} B_i$ is of rank $n$. Then there exists $j\in \{1,..,p\}$ such that 
$A_p-B_j$ is of rank $n$ satisfying $\left(A_p-B_j\right)^{-1}\succeq A_p^{-1}$ and 
$$
{\rm Tr}\left(A_p-B_j\right)^{-1}\leqslant {\rm Tr}\left(A_p^{-1}\right)\cdot\left[\frac{p-n+{\rm Tr}\left(A_p^{-1}B\right)+1}{p-n+{\rm Tr}\left(A_p^{-1}B\right)}\right] 
- \frac{{\rm Tr}\left(A_p^{-2}B\right)}{p-n+{\rm Tr}\left(A_p^{-1}B\right)}.
$$
\end{lem}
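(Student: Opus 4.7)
The plan is to apply the Sherman--Morrison--Woodbury identity and then close by an averaging argument over $j$. I would set $M_j := B_j^{1/2} A_p^{-1} B_j^{1/2}$ and $N_j := B_j^{1/2} A_p^{-2} B_j^{1/2}$, both positive semidefinite. Whenever $M_j \prec I$ (equivalently, $A_p - B_j$ has rank $n$), Woodbury's identity yields
\[
(A_p - B_j)^{-1} = A_p^{-1} + A_p^{-1} B_j^{1/2} (I - M_j)^{-1} B_j^{1/2} A_p^{-1},
\]
which immediately gives $(A_p - B_j)^{-1} \succeq A_p^{-1}$ and, after taking the trace,
\[
{\rm Tr}\bigl((A_p - B_j)^{-1}\bigr) - {\rm Tr}(A_p^{-1}) = {\rm Tr}\bigl((I - M_j)^{-1} N_j\bigr).
\]

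The pivotal ingredient I would establish next is an elementary spectral inequality: for any $n \times n$ positive semidefinite matrices $M, N$ with ${\rm Tr}(M) < 1$,
\[
{\rm Tr}\bigl((I - M)^{-1} N\bigr) \leqslant \frac{{\rm Tr}(N)}{1 - {\rm Tr}(M)}.
\]
One diagonalizes $M = U D U^t$, reduces the left-hand side to $\sum_k (U^t N U)_{kk}/(1 - D_{kk})$, and notes that the diagonal entries of $U^t N U$ are non-negative and that each $D_{kk} \leqslant {\rm Tr}(M)$ since the other eigenvalues of $M$ are non-negative. Note also that ${\rm Tr}(M) < 1$ together with $M \succeq 0$ forces $M \prec I$, thereby guaranteeing invertibility of $A_p - B_j$.

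To close, I would run an averaging argument. The identities $\sum_{j \leqslant p} {\rm Tr}(M_j) = {\rm Tr}(A_p^{-1}(A_p - B)) = n - {\rm Tr}(A_p^{-1} B)$ and $\sum_{j \leqslant p} {\rm Tr}(N_j) = {\rm Tr}(A_p^{-1}) - {\rm Tr}(A_p^{-2} B)$ follow at once from $\sum_j B_j = A_p - B$. Setting $\lambda := ({\rm Tr}(A_p^{-1}) - {\rm Tr}(A_p^{-2} B))/(p - n + {\rm Tr}(A_p^{-1} B))$, which is precisely the excess permitted by the statement, the sum of the linear quantities ${\rm Tr}(N_j) - \lambda(1 - {\rm Tr}(M_j))$ over $j$ vanishes by choice of $\lambda$; hence some $j$ satisfies ${\rm Tr}(N_j) \leqslant \lambda(1 - {\rm Tr}(M_j))$. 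A short case-analysis rules out ${\rm Tr}(M_j) \geqslant 1$ for this $j$ (such an inequality would force ${\rm Tr}(N_j) = 0$, hence $B_j = 0$, hence $M_j = 0$, a contradiction), so the spectral inequality applies and gives ${\rm Tr}((I - M_j)^{-1} N_j) \leqslant \lambda$, which rewrites as the claimed bound. The hardest step, I expect, is the spectral inequality: it is the device that converts the non-scalar quantity ${\rm Tr}((I - M_j)^{-1} N_j)$ into a scalar amenable to one-dimensional averaging; the rest is bookkeeping.
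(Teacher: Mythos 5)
Your proof is correct and follows essentially the same approach as the paper: Sherman--Morrison--Woodbury, the bound $\Vert M_j\Vert \leqslant {\rm Tr}(M_j)$ (you apply it in scalar form after tracing and diagonalizing, while the paper applies it at the operator level to get $(I-M_j)^{-1}\preceq \frac{1}{1-{\rm Tr}(M_j)}I$ before taking traces), and the identical averaging argument with the same value of $\lambda=\alpha$. Your explicit handling of the edge case ${\rm Tr}(M_j)\geqslant 1$ is a small addition in rigor that the paper leaves implicit, but it is not a different method.
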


\begin{proof}

Our aim is to find $C$ among the $(B_i)_{i\leqslant p}$ such that $A_p-C$ is still invertible 
and then have a control on ${\rm Tr}\left(A_p-C\right)^{-1}$. We would like to use the Sherman-Morrison-Woodbury 
which states that 
\begin{equation}\label{sherman-morrison-woodbury}
\left( A_p -C\right)^{-1} = A_p^{-1} + A_p^{-1}C^{\frac{1}{2}}\left( Id - C^{\frac{1}{2}}A_p^{-1}C^{\frac{1}{2}}\right)^{-1}C^{\frac{1}{2}}A_p^{-1}.
\end{equation}
For this formula to hold, we should ensure choosing $C$ such that $ Id - C^{\frac{1}{2}}A_p^{-1}C^{\frac{1}{2}}$ is invertible. Since 
 $C^{\frac{1}{2}}A_p^{-1}C^{\frac{1}{2}}$ is a positive definite matrix, it is sufficient to have $\left \Vert C^{\frac{1}{2}}A_p^{-1}C^{\frac{1}{2}}\right\Vert <1$ 
 in order to ensure the invertibility of $ Id - C^{\frac{1}{2}}A_p^{-1}C^{\frac{1}{2}}$. Now since $\left \Vert C^{\frac{1}{2}}A_p^{-1}C^{\frac{1}{2}}\right\Vert \leqslant 
 {\rm Tr}\left(A_p^{-1}C\right) $ 
 then it would be sufficient to have $1-{\rm Tr}\left(A_p^{-1}C\right) >0$ in order to use (\ref{sherman-morrison-woodbury}).

Since ${\rm Tr}\left(A_p^{-2}C\right)$ is positive then we may search for $C$ satisfying 
\begin{equation}\label{condition1C}
{\rm Tr}\left(A_p^{-2}C\right) \leqslant \alpha \left(1-{\rm Tr}\left(A_p^{-1}C\right)\right),
\end{equation}
where $\alpha$ is a positive parameter which will be chosen later.

In order to guarantee the existence of $C$ among the $(B_i)_{i\leqslant p}$ that satisfies (\ref{condition1C}), it is sufficient 
to prove that (\ref{condition1C}) holds when taking the sum over all $(B_i)_{i\leqslant p}$. Therefore, we need to prove that 
\begin{equation}\label{condition-sum}
\sum_{i\leqslant p}{\rm Tr}\left(A_p^{-2}B_i\right) \leqslant \alpha \left(p-\sum_{i\leqslant p}{\rm Tr}\left(A_p^{-1}B_i\right)\right).
\end{equation}
Now since the trace is linear and $\sum_{i\leqslant p}B_i = A_p -B$ then (\ref{condition-sum}) is equivalent to the following 
\begin{equation}\label{alpha}
{\rm Tr}\left(A_p^{-1}\right)-{\rm Tr}\left(A_p^{-2}B\right)\leqslant \alpha \left(p-n+{\rm Tr}\left(A_p^{-1}B\right)\right).
\end{equation}
Choose $\alpha$ so that (\ref{alpha}) becomes true. For that let $\alpha= \frac{{\rm Tr}\left(A_p^{-1}\right) - {\rm Tr}\left( A_p^{-2} B\right)}{p-n+ {\rm Tr}\left(A_p^{-1}B\right)}$.

Therefore, there exists $j\leqslant p$ such that $C=B_j$ satisfies (\ref{condition1C}). Now, we may use the Sherman-Morrison-Woodbury formula 
(\ref{sherman-morrison-woodbury}) and write
\begin{align*}
\left( A_p -B_j\right)^{-1} &=  A_p^{-1} + A_p^{-1}B_j^{\frac{1}{2}}\left( Id - B_j^{\frac{1}{2}}A_p^{-1}B_j^{\frac{1}{2}}\right)^{-1}B_j^{\frac{1}{2}}A_p^{-1}\\
&\preceq A_p^{-1} + \frac{ A_p^{-1}B_jA_p^{-1}}{1-{\rm Tr}\left(A_p^{-1}B_j\right)}.
\end{align*}
Now since $B_j$ satisfies (\ref{condition1C}), then 
\begin{equation}\label{inverse-update-ineq}
A_p^{-1}\preceq \left( A_p -B_j\right)^{-1} \preceq A_p^{-1} + \alpha \frac{ A_p^{-1}B_jA_p^{-1}}{{\rm Tr}\left(A_p^{-2}B_j\right)}.
\end{equation}
Taking the trace in (\ref{inverse-update-ineq}), we get 
$$
{\rm Tr}\left( A_p -B_j\right)^{-1} \leqslant {\rm Tr}\left(A_p^{-1}\right) + \alpha.
$$
Replacing $\alpha$ with its value, we finish the proof of the lemma.
\end{proof}

\begin{proof}[Proof of Theorem~\ref{general-result}]
The proof will be based on an iteration of the previous lemma.  

We will construct the set $\sigma$ step by step, starting with $\sigma_0 =\{1,..,m\}$ and 
at each time taking away the "bad" indices. 
Denote by $A_0 =A= B+ \sum_{i\in\sigma_0} B_i$. Apply Lemma~\ref{iteration-lem} to find 
$j_0\in\sigma_0$ such that $A_1:=A_0-B_{j_0}$ is of rank $n$ satisfying $A_1^{-1}\succeq A_0^{-1}$ and 
\begin{equation}\label{trace-A1}
{\rm Tr}\left(A_1^{-1}\right)\leqslant {\rm Tr}\left(A_0^{-1}\right)\cdot\left[\frac{\vert\sigma_0\vert-n
+{\rm Tr}\left(A_0^{-1}B\right)+1}{\vert\sigma_0\vert-n+{\rm Tr}\left(A_0^{-1}B\right)}\right] 
- \frac{{\rm Tr}\left(A_0^{-2}B\right)}{\vert\sigma_0\vert-n+{\rm Tr}\left(A_0^{-1}B\right)}.
\end{equation}
Let $\sigma_1= \sigma_0 \setminus \{j_0\}$, then $A_1= B+ \sum_{i\in \sigma_1}B_i$ and $\vert \sigma_1\vert =m-1$. 
Now apply Lemma~\ref{iteration-lem} again in order to find $j_1\in\sigma_1$ such that 
$A_2:=A_1-B_{j_1}$ is of rank $n$ satisfying $A_2^{-1}\succeq A_1^{-1}$ and 
$$
{\rm Tr}\left(A_2^{-1}\right)\leqslant {\rm Tr}\left(A_1^{-1}\right)\cdot\left[\frac{\vert\sigma_1\vert-n+
{\rm Tr}\left(A_1^{-1}B\right)+1}{\vert\sigma_1\vert-n+{\rm Tr}\left(A_1^{-1}B\right)}\right] 
- \frac{{\rm Tr}\left(A_1^{-2}B\right)}{\vert\sigma_1\vert-n+{\rm Tr}\left(A_1^{-1}B\right)}.
$$
Since $A_0^{-1}\preceq A_1^{-1}$ then 
\begin{equation}\label{first-trace-A2}
{\rm Tr}\left(A_2^{-1}\right)\leqslant {\rm Tr}\left(A_1^{-1}\right)\cdot\left[\frac{\vert\sigma_1\vert-n+
{\rm Tr}\left(A_0^{-1}B\right)+1}{\vert\sigma_1\vert-n+{\rm Tr}\left(A_0^{-1}B\right)}\right] 
- \frac{{\rm Tr}\left(A_0^{-2}B\right)}{\vert\sigma_1\vert-n+{\rm Tr}\left(A_0^{-1}B\right)}.
\end{equation}
Let $\sigma_2=\sigma_1\setminus\{j_1\}$, then $A_2= B+ \sum_{i\in \sigma_2}B_i$ and $\vert \sigma_2\vert =m-2$. 
Combining (\ref{trace-A1}) and (\ref{first-trace-A2}), we have
\begin{equation}\label{trace-A2}
{\rm Tr}\left(A_2^{-1}\right)\leqslant {\rm Tr}\left(A_0^{-1}\right)\cdot\left[\frac{\vert\sigma_0\vert-n
+{\rm Tr}\left(A_0^{-1}B\right)+1}{\vert\sigma_1\vert-n+{\rm Tr}\left(A_0^{-1}B\right)}\right] 
- \frac{2{\rm Tr}\left(A_0^{-2}B\right)}{\vert\sigma_1\vert-n+{\rm Tr}\left(A_0^{-1}B\right)}.
\end{equation}
Suppose that we constructed $\sigma_p$ of size $m-p$ such that $A_p= B+ \sum_{i\in \sigma_p}B_i$ 
satisfies $A_p^{-1}\succeq A_0^{-1}$ and 
\begin{equation}\label{induction-step}
{\rm Tr}\left(A_p^{-1}\right)\leqslant {\rm Tr}\left(A_0^{-1}\right)\cdot\left[\frac{\vert\sigma_0\vert-n
+{\rm Tr}\left(A_0^{-1}B\right)+1}{\vert\sigma_{p-1}\vert-n+{\rm Tr}\left(A_0^{-1}B\right)}\right] 
- \frac{p{\rm Tr}\left(A_0^{-2}B\right)}{\vert\sigma_{p-1}\vert-n+{\rm Tr}\left(A_0^{-1}B\right)}.
\end{equation}
Applying Lemma~\ref{iteration-lem}, we find $j_p\in\sigma_p$ such that 
$A_{p+1}:=A_p-B_{j_p}$ is of rank $n$ satisfying $A_{p+1}^{-1}\succeq A_p^{-1}$ and 
\begin{equation}\label{first-trace-Ap}
{\rm Tr}\left(A_{p+1}^{-1}\right)\leqslant {\rm Tr}\left(A_p^{-1}\right)\cdot\left[\frac{\vert\sigma_p\vert-n
+{\rm Tr}\left(A_0^{-1}B\right)+1}{\vert\sigma_p\vert-n+{\rm Tr}\left(A_0^{-1}B\right)}\right] 
- \frac{{\rm Tr}\left(A_0^{-2}B\right)}{\vert\sigma_p\vert-n+{\rm Tr}\left(A_0^{-1}B\right)}.
\end{equation}
Let $\sigma_{p+1}=\sigma_p\setminus\{j_p\}$, then $A_{p+1}= B+ \sum_{i\in \sigma_{p+1}}B_i$ and $\vert \sigma_{p+1}\vert =m-p-1$. 
Combining (\ref{induction-step}) and (\ref{first-trace-Ap}), we have
\begin{equation}\label{trace-Ap}
{\rm Tr}\left(A_{p+1}^{-1}\right)\leqslant {\rm Tr}\left(A_0^{-1}\right)\cdot\left[\frac{\vert\sigma_0\vert-n
+{\rm Tr}\left(A_0^{-1}B\right)+1}{\vert\sigma_p\vert-n+{\rm Tr}\left(A_0^{-1}B\right)}\right] 
- \frac{(p+1){\rm Tr}\left(A_0^{-2}B\right)}{\vert\sigma_p\vert-n+{\rm Tr}\left(A_0^{-1}B\right)}.
\end{equation}
We can continue this procedure as long as $\vert \sigma_p\vert \geqslant n-{\rm Tr}\left(A_0^{-1}B\right)+1$. 
Therefore, we have proved by induction that for any $l\leqslant m - n + \left\lfloor {\rm Tr}\left(A_0^{-1}B\right)\right\rfloor$, 
there exists $\sigma_l$ of size $m-l$ such that 
$$
{\rm Tr}\left(A_{\sigma_l}^{-1}\right)\leqslant {\rm Tr}\left(A_0^{-1}\right)\cdot\left[\frac{m-n
+{\rm Tr}\left(A_0^{-1}B\right)+1}{m-n+1+{\rm Tr}\left(A_0^{-1}B\right)-l}\right] 
- \frac{l\cdot{\rm Tr}\left(A_0^{-2}B\right)}{m-n+1+{\rm Tr}\left(A_0^{-1}B\right)-l},
$$
where $A_{\sigma_l} = B+\sum_{i\in\sigma_l}B_i$.
\end{proof}

\vskip 0.5cm

\textbf{Acknowledgement :} The author would like to thank Christos Boutsidis for his valuable comments regarding this manuscript. The author would like to thank the Pacific Institute for Mathematical Science for financial support and the University of Alberta for the excellent working conditions.

\nocite{*}
\bibliographystyle{abbrv}
\bibliography{bibliography-basis-selection}




\section{Appendix}
We will present here an algorithmic proof of Theorem~\ref{gluskin-olevskii}. Our proof is inspired by the tools developed 
in \cite{MR2780071} but the procedure will be quite opposite. Write $UU^t=\sum_{i\leqslant m} u_iu_i^t$, where $(u_i)_{i\leqslant m}$ 
denote the columns of $U$. The aim is to construct $A_\sigma= \sum_{i\in\sigma} u_iu_i^t$ which satisfies the conclusion needed.
 In \cite{MR2780071}, the construction is done step by step starting from $A_0=0$ and studying the evolution of the eigenvalues 
 when adding a suitable rank one matrix. Our construction will also be done step by step, starting however with $A_0=A$ and 
 studying the evolution of the Hilbert-Schmidt norm of the inverse when subtracting a suitable rank one matrix. Precisely, at each step 
 we will remove from $U$ the "bad" columns until we have $n$ remaining "good" lineary independent columns.
 
 The proof is an iteration of the following Lemma:
 
 \begin{lem}\label{iteration-lem-rank1}
 Let $U$ be an $n\times m$ matrix of rank $n$. There exists 
 $\sigma\subset\{1,..,m\}$ of size $m-1$ such that $U_\sigma$ is 
 of  rank $n$ and 
 $$
 {\rm Tr}\left(U_\sigma U_\sigma^t\right)^{-1}\leqslant \frac{m-n+1}{m-n}{\rm Tr}\left(UU^t\right)^{-1}.
 $$
 \end{lem}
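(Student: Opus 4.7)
The plan is to apply the Sherman--Morrison formula to the removal of a single column and then use an averaging argument to locate a column whose removal is cheap. Set $A=UU^t=\sum_{i\leqslant m} u_iu_i^t$; we may assume $m>n$, since otherwise the denominator $m-n$ is zero and, moreover, there is nothing to remove. For any index $j$ with $u_j^t A^{-1}u_j<1$, Sherman--Morrison gives
$$
(A-u_ju_j^t)^{-1}=A^{-1}+\frac{A^{-1}u_ju_j^t A^{-1}}{1-u_j^t A^{-1}u_j},
$$
and taking the trace yields
$$
{\rm Tr}(A-u_ju_j^t)^{-1}={\rm Tr}(A^{-1})+\frac{u_j^t A^{-2}u_j}{1-u_j^t A^{-1}u_j}.
$$
The task therefore reduces to finding an index $j$ with $u_j^t A^{-1}u_j<1$ for which the last fraction is bounded by $\frac{1}{m-n}{\rm Tr}(A^{-1})$.

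Next I would exploit the two elementary trace identities obtained by summing the scalar quantities above against $\sum_j u_ju_j^t = A$: namely $\sum_{j\leqslant m} u_j^t A^{-1}u_j={\rm Tr}(A^{-1}A)=n$ and $\sum_{j\leqslant m} u_j^t A^{-2}u_j={\rm Tr}(A^{-2}A)={\rm Tr}(A^{-1})$. The first identity yields $\sum_{j}(1-u_j^t A^{-1}u_j)=m-n>0$, which already guarantees that at least one index satisfies $u_j^t A^{-1}u_j<1$. Setting $\alpha=\frac{{\rm Tr}(A^{-1})}{m-n}$, the two identities together give the averaged equality ${\rm Tr}(A^{-1})=\alpha(m-n)$. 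A pigeonhole step (identical to the one appearing in the proof of Lemma~\ref{iteration-lem}) produces an index $j$ for which
$$
u_j^t A^{-2}u_j\leqslant \alpha\bigl(1-u_j^t A^{-1}u_j\bigr).
$$
For such $j$ the denominator is positive, so Sherman--Morrison applies and $A-u_ju_j^t$ has rank $n$. Dividing and adding to ${\rm Tr}(A^{-1})$ yields
$$
{\rm Tr}(A-u_ju_j^t)^{-1}\leqslant {\rm Tr}(A^{-1})+\alpha=\frac{m-n+1}{m-n}{\rm Tr}(A^{-1}),
$$
which is the desired bound with $\sigma=\{1,\ldots,m\}\setminus\{j\}$.

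The main subtlety is ensuring that the averaging argument produces an index simultaneously lying in $\{j:u_j^t A^{-1}u_j<1\}$ and satisfying the scalar bound; the point is that indices with $u_j^t A^{-1}u_j\geqslant 1$ contribute a nonnegative quantity to $\sum_j u_j^t A^{-2}u_j$ while contributing a nonpositive quantity to $\alpha\sum_j(1-u_j^t A^{-1}u_j)$, so they cannot obstruct the averaging from selecting a good index among those with $u_j^t A^{-1}u_j<1$. This is precisely the $B=0$, $B_i=u_iu_i^t$ specialization of Lemma~\ref{iteration-lem}, which is why the present argument serves naturally as a warm-up for the general statement.
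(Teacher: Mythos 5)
Your proof is correct and follows essentially the same route as the paper: apply Sherman--Morrison to a single column removal, introduce the same threshold $\alpha={\rm Tr}(A^{-1})/(m-n)$, and use the same averaging/pigeonhole step over the columns, with the positivity of $u_j^tA^{-2}u_j$ guaranteeing that the selected index satisfies $u_j^tA^{-1}u_j<1$ so that the rank is preserved. The extra paragraph you include spelling out why indices with $u_j^tA^{-1}u_j\geqslant 1$ cannot be selected is a nice clarification of a point the paper passes over tersely, but it does not constitute a different argument.
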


\begin{proof}
Denote $A=UU^t=\sum_{i\leqslant m}u_iu_i^t$, where $(u_i)_{i\leqslant m}$ are the columns of $U$. 
We are searching for vector $v$ chosen among the columns of $U$
 such that $A-vv^t$ is still invertible and has a control on the Hilbert-Schmidt norm 
of its inverse. We would like to use the Sherman-Morrison formula which states that if $v^tA^{-1}v\neq 1$ then
\begin{equation}\label{sherman-morrison}
{\rm Tr}\left(A-vv^t\right)= {\rm Tr}\left(A^{-1}\right)+ \frac{v^tA^{-2}v}{1-v^tA^{-1}v}. 
\end{equation}
For that, we will search for $v$ such that $v^tA^{-1}v <1$. 
Since $v^tA^{-2}v$ is positive, then it is sufficient to search for $v$ 
satisfying 
\begin{equation}\label{condition1v}
v^tA^{-2}v\leqslant \alpha \cdot \left(1-v^tA^{-1}v\right),
\end{equation}
where $\alpha= \frac{{\rm Tr}\left(A^{-1}\right)}{m-n}$.
To guarantee that such $v$ exists, we may prove that (\ref{condition1v})
 holds when taking the sum over all columns of $U$ 
$$
\sum_{i\leqslant m} u_i^tA^{-2}u_i\leqslant 
\alpha\cdot\left(m- \sum_{i\leqslant m}u_i^tA^{-1}u_i\right).
$$
This is equivalent to the following
$$
{\rm Tr}\left(A^{-2}\sum_{i\leqslant m} u_iu_i^t\right)\leqslant 
\alpha\cdot\left(m- {\rm Tr}\left( A^{-1}\sum_{i\leqslant m}u_iu_i^t\right)\right).
$$
Since $A=\sum_{i\leqslant m}u_iu_i^t$, it is then reduced to prove that
$$
{\rm Tr}\left(A^{-1}\right)\leqslant \alpha\cdot \left(m-n\right),
$$
which is true by the choice of $\alpha$.
Therefore we have found $j\in\{1,..,m\}$ such that $u_j$ satisfies (\ref{condition1v}). 
We may now use (\ref{sherman-morrison}) to get
$$
{\rm Tr}\left(A-u_ju_j^t\right)= {\rm Tr}\left(A^{-1}\right)+ 
\frac{u_j^tA^{-2}u_j}{1-u_j^tA^{-1}u_j}\leqslant \frac{m-n+1}{m-n}{\rm Tr}\left(A^{-1}\right).
$$
The Lemma follows by taking $\sigma =\{1,..,m\}\setminus \{j\}$.
\end{proof}

\begin{proof}[Proof of Theorem~\ref{gluskin-olevskii}]
Start with $A_0=A=UU^t$ and apply Lemma~\ref{iteration-lem-rank1} 
to find $\sigma_1$ of size $m-1$ such that $A_1=U_{\sigma_1}U_{\sigma_1}^t$ 
is of rank $n$ and satisfies 
\begin{equation}\label{eq-step1}
{\rm Tr}\left(A_1^{-1}\right)\leqslant \frac{m-n+1}{m-n}{\rm Tr}\left(A_0^{-1}\right).
\end{equation}
Now apply Lemma~\ref{iteration-lem-rank1} again with $U_{\sigma_1}$ to find 
$\sigma_2$ of size $m-2$ such that $A_2=U_{\sigma_2}U_{\sigma_2}^t$ is 
of rank $n$ and satisfies 
\begin{equation}\label{eq-step2}
{\rm Tr}\left(A_2^{-1}\right)\leqslant \frac{m-n}{m-n-1}{\rm Tr}\left(A_1^{-1}\right)\leqslant 
 \frac{m-n+1}{m-n-1}{\rm Tr}\left(A_0^{-1}\right).
\end{equation}
If we continue this procedure, after $k$ steps we find $\sigma_k$ of size $m-k$ such that  
$A_k=U_{\sigma_k}U_{\sigma_k}^t$ is 
of rank $n$ and satisfies 
\begin{equation}\label{eq-stepk}
{\rm Tr}\left(A_k^{-1}\right)\leqslant 
 \frac{m-n+1}{m-n-k+1}{\rm Tr}\left(A_0^{-1}\right).
\end{equation}
This holds for any $k\leqslant m-n$. After $k=m-n$ steps, Theorem~\ref{gluskin-olevskii} is proved.

\end{proof}

\vskip 0.5cm

\end{document}